\numberwithin{equation}{section}
\newtheorem{Theorem}{Theorem}[section]
\newtheorem{Proposition}[Theorem]{Proposition}
\newtheorem{Corollary}[Theorem]{Corollary}
\theoremstyle{definition}
\newtheorem{Definition}{Definition}[section]
\theoremstyle{remark}
\newtheorem{Remark}{Remark}[section]
\def\to{\rightarrow}
\def\ms{\medskip}
\def\cF{\mathcal{F}}
\def\d{{\mathrm{d}}}
\def\sE{{\mathbb{E}}}
\def\sN{{\mathbb{N}}}
\def\sP{\mathbb{P}}
\def\sR{{\mathbb R}}
\newcommand{\lc}
{\mathrel{\raise2pt\hbox{${\mathop<\limits_{\raise1pt\hbox
{\mbox{$\sim$}}}}$}}}
\newcommand{\gc}
{\mathrel{\raise2pt\hbox{${\mathop>\limits_{\raise1pt\hbox{\mbox{$\sim$}}}}$}}}
\newcommand{\ec}
{\mathrel{\raise2pt\hbox{${\mathop=\limits_{\raise1pt\hbox{\mbox{$\sim$}}}}$}}}
\def\bb{\begin{equation}} \def\ee{\end{equation}}
\def\bbn{\begin{equation*}} \def\een{\end{equation*}}
\def\beqn{\begin{eqnarray}}  \def\eqn{\end{eqnarray}}
\def\beqnx{\begin{eqnarray*}} \def\eqnx{\end{eqnarray*}}
\def\bn{\begin{enumerate}} \def\en{\end{enumerate}}
\def\bd{\begin{description}} \def\ed{\end{description}}
\definecolor{DarkGreen}{rgb}{0.2,0.6,0.2}
\def\pink#1{\textcolor{\pink}{#1}}
\definecolor{brilliantrose}{rgb}{1.0, 0.33, 0.64}
\begin{document}

\title{
$\alpha$-Potential Games
for Decentralized Control of Connected and Automated Vehicles
  }

\author{
Xuan Di\thanks{Department of Civil Engineering and Engineering Mechanics, Data Science Institute, Columbia University}
\and
Anran Hu\thanks{Department of Industrial Engineering and Operations Research, Columbia University}
\and
Zhexin Wang\footnotemark[2]
\and 
Yufei Zhang\thanks{Department of Mathematics, Imperial College London,  London,  UK} 
}




\date{}
\maketitle

\begin{abstract}
   Designing scalable and safe control strategies for large populations of connected and automated vehicles (CAVs) requires accounting for strategic interactions among heterogeneous agents under decentralized information. While dynamic games provide a natural modeling framework, computing Nash equilibria (NEs) in large-scale settings remains challenging, and existing mean-field game approximations rely on restrictive assumptions that fail to capture collision avoidance and heterogeneous behaviors.
This paper proposes an $\alpha$-potential game framework for decentralized CAV control. We show that computing $\alpha$-NE reduces to solving a decentralized control problem, and derive tight bounds of the parameter $\alpha$ based on interaction intensity and asymmetry. We further develop scalable policy gradient algorithms for computing $\alpha$-NEs using decentralized neural-network policies. Numerical experiments demonstrate that the proposed framework accommodates diverse traffic flow models and effectively captures collision avoidance,
obstacle avoidance, 
and agent heterogeneity.
 
\end{abstract}

\noindent
\textbf{Key words.} 
Stochastic differential game, 
Potential game,
Nash equilibrium,
Distributed control,
Multi-vehicle systems,
Connected and automated vehicles

\ms
\noindent
\textbf{AMS subject classifications.} 
91A14, 91A06,  91A15



\medskip


\section{Introduction}
\label{sec:intro}

How can we design scalable control strategies for large populations of connected and automated vehicles (CAVs) that account for strategic interactions among agents with heterogeneous preferences while ensuring safety?
Addressing this question is essential to realizing the full potential of CAV systems.
Scalability demands decentralized policies that operate using local information, thereby avoiding the communication and computational burdens of centralized coordination. Meanwhile, these policies must also guarantee safety, with collision avoidance remaining a primary constraint in dynamic traffic environments \cite{ammour2021collision}. 

CAV control problems, such as platooning  \cite{di2021survey,braiteh2024platooning}
and lane change 
\cite{kamal2024cooperative}, 
have been studied extensively  from an optimal control perspective. 
Most of them rely on centralized control, where vehicles coordinate toward a shared objective using consensus-based methods \cite{bailo2018optimal} 
or model predictive control \cite{daini2024traffic}. 
However, centralized coordination typically requires substantial communication infrastructure, 
which may be impractical and costly  in real-world deployments.
While decentralized or distributed optimal control offers a more scalable and implementable alternative \cite{dai2017distributed}, it still assumes full cooperation among vehicles and cannot fully capture the strategic interactions or conflicts that naturally arise in heterogeneous traffic environments.


A dynamic game framework offers a natural approach for CAVs with heterogeneous and competing objectives (see e.g., \cite{jing2024decentralized,di2025mfgreview, yan2025markov}). 
Unlike traditional optimal control approaches that assume full cooperation toward a centralized goal, 
the game-theoretic approach treats each CAV as a rational agent optimizing its own self-interested payoff while strategically interacting with others. 
Agents’ behaviors are characterized by the Nash equilibrium (NE), a set of policies in which no agent can improve their   payoff by unilaterally deviating. 
A key step in this framework is to exploit structural properties of   multi-agent interactions to make NE computation tractable in large-scale dynamic games.

An established approach to obtaining equilibrium policies in large CAV games \cite{festa2018mean,huang2020game,di2025mfgreview} is to model vehicles as homogeneous agents that interact weakly through their empirical distribution and to solve the resulting mean-field game (MFG) formulation \cite{huang2006Largepopulation,lasry2007mean}. 
While intended to simplify the analysis for large population games, this approach relies on restrictive assumptions that limit its practical applicability in traffic control.
First, the MFG regime  implies that each vehicle is influenced only by the   population distribution rather than by specific nearby vehicles, making it unsuitable for modeling collision avoidance where close-proximity interactions play a dominant role. Second, the homogeneity assumption neglects heterogeneous objectives and asymmetric interactions that are inherent to mixed traffic environments. Third, despite the approximation, solving   MFGs can be computationally intractable  \cite{yardim2024mean}. These limitations indicate that   MFGs are inadequate   for capturing strategic interactions among heterogeneous CAVs.

Meanwhile, the recently proposed $\alpha$-potential game framework offers a powerful approach for   solving general dynamic games \cite{guo2025markov, guo2025alpha, guo2025distributed}. It reduces the computation of NEs of a dynamic game to optimizing a single $\alpha$-potential function and analyzing the parameter $\alpha$.  Unlike MFGs which rely on weak interactions among infinitely many homogeneous players, the $\alpha$-potential framework applies directly to finite-player settings with heterogeneous agents and potentially strong, local interactions.

\paragraph{Contributions.}
This paper adapts the $\alpha$-potential game framework to CAV control. In contrast to \cite{guo2025alpha, guo2025markov, guo2025distributed}, which focus on  games with open-loop controls depending on  system noises, we consider dynamic games with decentralized closed-loop policies, enabling scalable control of large-scale CAV systems. The main contributions are:
(i) We show that finding an $\alpha$-NE reduces to solving a   decentralized control problem. Due to the use of closed-loop policies, constructing this control problem requires techniques distinct from those in \cite{guo2025alpha, guo2025markov, guo2025distributed} for open-loop games (see Remark \ref{eq:potential}).
(ii)  We characterize the parameter $\alpha$ in terms of the intensity and asymmetry of vehicles’ interactions. The  bound is tighter than those in existing works, particularly for irregular interaction kernels.
(iii) We develop scalable policy gradient algorithms for decentralized neural-network–based policies. Extensive numerical experiments demonstrate that our framework captures collision avoidance,
obstacle avoidance,
and player heterogeneity more effectively than   MFG  approaches.

\section{Markov   games for CAV control}
\label{sec:problem}
This section formulates a  dynamic Markov game with decentralized policies  for CAV control.

\paragraph{Problem Setup.}
Consider a finite-player stochastic differential game with decentralized Markov policies, defined as follows: 
$T>0$ is a fixed  time horizon, 
   $[N]=\{1,\ldots, N\}$,  $N\in \mathbb N$,
    is a finite set of
players, and for all $i\in [N]$, 
$ A_i\subset \sR^{k}$ is  player $i$'s action set,
and 
     $\pi_i$ is the set  of player $i$'s admissible policies, containing   measurable functions $\phi_i:[0,T]\times \sR^{d}\to A_i$ which satisfy   
 $\sup_{t\in [0,T]}|\phi_i(t,0)|<\infty$ and  are Lipschitz continuous in the second variable.
 We denote by 
$\pi=\prod_{i\in [N]}\pi_i$   the set of all admissible policy profiles of all players, and by   
$\pi_{-i}=\prod_{j\not = i}\pi_j$   the set of admissible policy profiles of all players except player $i$. We denote by 
 $\phi=(\phi_i)_{i\in [N]}$
 and 
 $\phi_{-i}=(\phi_j)_{j\in [N]\setminus\{i\}}$
 a generic element of $\pi$ and $\pi_{-i}$, respectively.  

Each player’s state evolves according to a controlled diffusion, determined by their chosen policy. The player then minimizes a cost functional over the set of admissible policies. 
More precisely, 
for any given $\phi=(\phi_i)_{i\in [N]}\in \pi$,
player $i$'s state dynamics $X^{\phi_i}_i$ 
is governed by  the following  dynamics:
for all $t\in [0,T]$,
\begin{equation}
\label{eq:state_i}
    \d  X_{i,t} = b_i(X_{i,t},\phi_i(t,X_{i,t}))
\d t +\sigma_i  (X_{i,t},\phi_i(t,X_{i,t}))\d W_t, \quad X_{i,0}=\xi_i,
\end{equation}
where $\xi_i\in \sR^d$ is a given initial state, 
 $b_i: \sR^{d}\times A_i\to \sR^{d}$
 and 
 $\sigma_i: \sR^{d}\times A_i\to \sR^{d\times m}$
 are   Lipschitz continuous functions,   and  
 $W:\Omega\times [0,T]\to \sR^m$ is  an  $m$-dimensional Brownian motion  on 
a complete  probability space 
$(\Omega,\cF,\sP)$. Given $\phi_{-i}\in \pi_{-i}$,
 player $i$ determines their optimal policy  by minimizing the following objective function   over  $\pi_i$:
\begin{equation}
\label{eq:cost_i}
      J_i(\phi) = \sE \Biggl[\int_0^T 
  \Biggl(
  f_i\big(
  X^{\phi_i}_{i,t},
  \phi_{i}(t,X^{\phi_i}_{i,t})\big) +\sum_{j\not =i}\lambda_{ij}K(X^{\phi_i}_{i,t}-X^{\phi_j}_{j,t})
  \Biggr)\d t  + g_i(X^{\phi_i}_{i,T})\Biggl],
\end{equation}
where 
$\lambda_{ij}\ge 0$ is a given constant, 
$f_i:\sR^d\times   A_i\to \sR$ and 
$K:\sR^d\to \sR$ 
are  given    running costs,
and 
 $g_i:\sR^d\to\sR$ is  a given  terminal cost.
 We assume that
 $f_i$ and $g_i$ are at most of quadratic growth, 
 and 
 $K$ is a bounded function satisfying  
   $K(x)=K(-x)$ for all $x\in \sR^d$.

 We  characterize the rational behaviour of players in the Markov game \eqref{eq:state_i}–\eqref{eq:cost_i} using the notion of an $\epsilon$-Nash equilibrium given below.

      \begin{Definition}
     \label{def:NE}
          For any $\epsilon\ge 0$, a policy profile 
          $\bar{\phi}=(\bar{\phi}_i)_{i\in [N]}\in \pi$  an $\epsilon$-Nash equilibrium (NE) of the
game \eqref{eq:state_i}–\eqref{eq:cost_i} if
$J_i(\bar{\phi})\le 
J_i(\phi_i,\bar{\phi}_{-i})+\epsilon$, for all $i\in [N]$ and $\phi_i\in \pi_i$. 

     \end{Definition}

According to Definition \ref{def:NE}, a joint  policy profile $\bar \phi$ is an $\epsilon$-NE if no player can improve her objective by more than $\epsilon$ through any unilateral deviation. Note that although player $i$'s objective in \eqref{eq:cost_i} depends on the joint states of all players, player $i$ optimizes \eqref{eq:cost_i} over decentralized policies that depend only on her own private state $X_i$.

\paragraph{Application to CAV Control.} The game \eqref{eq:state_i}-\eqref{eq:cost_i} encompasses various dynamic games    in CAV control. In this setting, each player represents a vehicle. The dynamics   \eqref{eq:state_i} can be taken as controlling velocity: 
\begin{equation}
\label{eq:CAV_1}
\d  x_{i,t} =
 \phi_i(t,x_{i,t} )
\d t, \quad t\in [0,T],
\end{equation}
where $x_{i,t}$  represents 
  the position   of the $i$-th vehicle at time $t$, 
  and the control 
  $v_{i,t}\coloneqq \phi_i(t,x_{i,t})$ represents 
  the   velocity    of the $i$-th vehicle at time $t$. 

Alternatively, 
one can take the dynamics   \eqref{eq:state_i} as a double integrator with controlled acceleration:
for all $t\in [0,T]$,
\begin{align}
\label{eq:CAV_2}
    \begin{split}
\d  x_{i,t} &=
 v_{i,t} \d t,
\quad 
\d  v_{i,t}  =
 \phi_i(t,x_{i,t}, v_{i,t})
\d t +\sigma_i \d W_t, 
    \end{split}
\end{align}

where $x_{i,t}$ and $v_{i,t}$  represent
  the position  and velocity   of the $i$-th vehicle at time $t$, respectively,
  and  the control 
  $u_{i,t}\coloneqq \phi_i(t,x_{i,t},v_{i,t})$ represents 
  the   acceleration    of the $i$-th vehicle at time $t$. 
  
  Vehicle $i$ determines its optimal route by minimizing the objective  \eqref{eq:cost_i},
where 
the terminal cost $  g_i$  specifies vehicle $ i$'s preferred target,    the running  cost $ f_i$ penalizes control effort  and
 enforces  the obstacle-avoidance behaviour for vehicle $ i $, and  the   kernel $ K $ together with the weights $ {(\lambda_{ij})}_{j\not = i} $  characterizes how the spatial distribution of other vehicles  influences vehicle $i$'s preferred route.
Typically, the kernel 
$K$
 decreases as the distance between two vehicles increases, modeling congestion-averse behavior.

\paragraph{Collision Avoidance Beyond Mean-Field.}
Our model captures local interactions and collision-avoidance behavior, which are crucial for CAV control and cannot be adequately represented within the traditional MFG framework \cite{huang2020game}; See Section \ref{sec:numerical_interaction} for details.  
This can be achieved by allowing the  weights $\lambda_{ij}$ to remain non-vanishing as the number of players $N \to \infty$, and  the  kernel $K$ to depend explicitly on $N$. 

For instance,   consider  the following $\lambda_{ij}$ and $K$ as in \cite{oelschlager1985law}: 
\begin{equation}
\label{eq:interaction_kernel}
\lambda_{ij}=N^{\beta-1}, \quad K(z)= \rho(N^{\beta/d}|z|),
\quad z\in \sR^d,
\end{equation}
where  $\rho:[0,\infty)\to [0,\infty)$ satisfies   $\lim_{|z|\to \infty}\rho(z)= 0$,
and $\beta\in [0,1]$ determines how the interaction strength among players scales with $N$.
When $\beta =0$, 
each player interacts with all others with strength $\mathcal O(1/N)$. This is  
the classical weak-interaction regime   assumed to facilitate mean-field approximations \cite{huang2006Largepopulation,lasry2007mean}.
 As $N\to \infty$, each player is influenced only by the aggregate population distribution rather than by individual players, as in the  MFG  formulation for traffic flow in \cite{huang2020game}.
 In contrast, 
when $\beta = 1$, the decay of $\rho$ implies that players interact only with neighbors at distance $\mathcal O(N^{-1/d})$, but with interaction strength of order one. This strong-interaction  regime models CAV systems more realistically, since   nearby vehicles exert non-negligible influence on individual decisions.
In this case,
 the mean-field approximation fails, but our proposed framework remains applicable.

\section{Main Theoretical Results}
\label{sec:theory}

This section analyzes the approximate NEs of the game \eqref{eq:state_i}–\eqref{eq:cost_i} using the $\alpha$-potential game framework developed in \cite{guo2025alpha}. This framework reduces  the challenging problem of identifying (approximate) NEs into a single optimization problem of the associated $\alpha$-potential function.  
This connection is made precise in the following proposition, which follows directly  from  \cite[Proposition 2.1]{guo2025alpha}.

 \begin{Proposition}

  Suppose that 
  the game \eqref{eq:state_i}-\eqref{eq:cost_i} 
  is an $\alpha$-potential game for some $\alpha\ge 0$, in the sense that there exists 
  $\Phi:\pi\to \sR$,
  called an $\alpha$-potential function, 
  such that 
for all $i\in [N]$, $\phi_{-i}\in \pi_{-i}$, and
$\phi_i,\phi'_i\in \pi_i$, 
$$
|(J_i(\phi'_i,\phi_{-i})
-J_i(\phi_i,\phi_{-i}))
-(\Phi(\phi'_i,\phi_{-i})
-\Phi(\phi_i,\phi_{-i}))|\le \alpha.
$$
Then 
for all $\epsilon\ge 0$,
if $\bar \phi\in \pi$
satisfies 
$\Phi(\bar \phi)\le \inf_{\phi\in \pi}\Phi(\phi)+ \epsilon$,
then $\bar \phi$ is an $(\alpha+\epsilon)$-NE of the game \eqref{eq:state_i}-\eqref{eq:cost_i}.
 
\end{Proposition}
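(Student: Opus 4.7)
The plan is to unfold the two hypotheses---the $\alpha$-potential inequality and the $\epsilon$-approximate minimization of $\Phi$---and combine them via a one-line chain. Fix an arbitrary player $i\in[N]$ and an arbitrary unilateral deviation $\phi_i\in\pi_i$. The goal is to bound $J_i(\bar\phi)-J_i(\phi_i,\bar\phi_{-i})$ from above by $\alpha+\epsilon$, since that is exactly the inequality required by Definition \ref{def:NE}.

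First, I would invoke the $\alpha$-potential property applied to the pair $(\bar\phi_i,\phi_i)$ with the frozen profile $\bar\phi_{-i}$. Dropping the absolute value on one side gives
\[
J_i(\bar\phi)-J_i(\phi_i,\bar\phi_{-i}) \;\le\; \Phi(\bar\phi)-\Phi(\phi_i,\bar\phi_{-i}) + \alpha.
\]
Next, I would use the near-minimization hypothesis: since $(\phi_i,\bar\phi_{-i})\in\pi$ is an admissible comparator for the infimum of $\Phi$ over $\pi$, the assumption $\Phi(\bar\phi)\le \inf_{\phi\in\pi}\Phi(\phi)+\epsilon$ yields
\[
\Phi(\bar\phi)-\Phi(\phi_i,\bar\phi_{-i}) \;\le\; \epsilon.
\]
Substituting into the previous display gives $J_i(\bar\phi)\le J_i(\phi_i,\bar\phi_{-i})+\alpha+\epsilon$. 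Since $i$ and $\phi_i$ were arbitrary, $\bar\phi$ is an $(\alpha+\epsilon)$-NE.

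There is essentially no analytic obstacle here beyond careful bookkeeping; the result is a direct consequence of the triangle-type estimate defining an $\alpha$-potential game together with $\epsilon$-optimality in $\Phi$. The only subtleties worth flagging in the write-up are that only one direction of the absolute value is needed, and that the infimum is taken over the entire product space $\pi$, which ensures that the perturbed profile $(\phi_i,\bar\phi_{-i})$ is a valid comparator.
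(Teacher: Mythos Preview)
Your argument is correct and is exactly the standard two-step proof: apply the $\alpha$-potential inequality (one direction) and then the $\epsilon$-optimality of $\bar\phi$ in $\Phi$ to bound any unilateral improvement by $\alpha+\epsilon$. The paper itself does not spell out a proof but simply cites \cite[Proposition 2.1]{guo2025alpha}; your write-up is the natural unpacking of that reference and there is nothing to add.
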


   The  following theorem constructs analytically an $\alpha$-potential function
for the game \eqref{eq:state_i}-\eqref{eq:cost_i} and quantifies the associated $\alpha$. 

\begin{Theorem}
\label{thm:alpha_PG}
 
Define $\Phi:\pi\to \sR$ such that for all $\phi\in \pi$,
\begin{equation}\label{eq:potential_fun_symmetric}
     \Phi(\phi) \coloneqq \sE \left[\int_0^T F(X^{\phi}_t,\phi(t, X^{\phi}_t))\d t + G(X^{\phi}_T)\right],
\end{equation}
where
$X^\phi=(X^{\phi_i}_i)_{i\in [N]}$
is the joint state process,
$\phi_t(t, X^{\phi}_t)
=
(\phi_i(t, X^{\phi_i}_{i,t}))_{i\in [N]}$ 
is the joint control process, and 
$F:\sR^{dN}\times A\to \sR$
and $G:\sR^{dN} \to \sR$
are given by
\begin{align*}
\begin{split}
F(x,a) \coloneqq \sum_{i=1}^N f_i(x_i,a_i) +
\sum_{1\le i<j\le N} \frac{\lambda_{ij}+\lambda_{ji}}{2} K(x_i-x_j), \quad
G(x)   \coloneqq \sum_{i=1}^N g_i(x_i). 
\end{split}
\end{align*}
Then $\Phi$ is an $\alpha$-potential function of the game  \eqref{eq:state_i}-\eqref{eq:cost_i}  with 
\begin{equation}
\label{eq:alpha_bound}
    \alpha \leq T\|K\|_{L^\infty}\max_{i\in [N]}\sum_{i\neq j}|\lambda_{ij}-\lambda_{ji}|.
\end{equation}
Hence any minimizer of $\Phi$ is an $\alpha$-NE of   the game  \eqref{eq:state_i}-\eqref{eq:cost_i}. 
\end{Theorem}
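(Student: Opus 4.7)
The plan is to verify the approximate potential condition directly by expanding, term by term, $J_i(\phi'_i,\phi_{-i})-J_i(\phi_i,\phi_{-i})$ and $\Phi(\phi'_i,\phi_{-i})-\Phi(\phi_i,\phi_{-i})$ under a unilateral deviation of player $i$ from $\phi_i$ to $\phi'_i$ with $\phi_{-i}$ held fixed, and then to bound the difference of these two expressions. The structural observation that powers the whole argument is that, because the dynamics \eqref{eq:state_i} are decentralized and closed-loop, the trajectory $X_j^{\phi_j}$ of each $j\neq i$ depends only on $\phi_j$ and on the common Brownian motion $W$, not on $\phi_i$. Hence a deviation by player $i$ changes only $X_i$, and all comparisons can be carried out pathwise with the other $N-1$ trajectories literally frozen, not merely equal in law. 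This is precisely the property that fails in the open-loop frameworks of \cite{guo2025alpha,guo2025markov,guo2025distributed}, and it is what enables a more direct and tighter estimate here.

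For $J_i$, I would use \eqref{eq:cost_i}: the $f_i$ and $g_i$ contributions produce a quantity depending only on $X_i^{\phi'_i}$ and $X_i^{\phi_i}$, while the interaction part equals $\sE\bigl[\int_0^T\sum_{j\neq i}\lambda_{ij}\bigl(K(X_{i,t}^{\phi'_i}-X_{j,t}^{\phi_j})-K(X_{i,t}^{\phi_i}-X_{j,t}^{\phi_j})\bigr)\d t\bigr]$. For $\Phi$, I would use \eqref{eq:potential_fun_symmetric}: the $\sum_k f_k$ and $\sum_k g_k$ pieces vary only through their $k=i$ summands, reproducing exactly the same $f_i,g_i$ contribution as in $J_i$; the pairwise part of $F$ varies only over pairs involving $i$, and collecting the indices $\{(k,i):k<i\}\cup\{(i,l):l>i\}$ together with the evenness $K(x)=K(-x)$ gives $\sE\bigl[\int_0^T\sum_{j\neq i}\tfrac{\lambda_{ij}+\lambda_{ji}}{2}\bigl(K(X_{i,t}^{\phi'_i}-X_{j,t}^{\phi_j})-K(X_{i,t}^{\phi_i}-X_{j,t}^{\phi_j})\bigr)\d t\bigr]$. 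Subtracting the two expansions, the $f_i,g_i$ contributions cancel and the kernel coefficients combine to $\lambda_{ij}-\tfrac{\lambda_{ij}+\lambda_{ji}}{2}=\tfrac{\lambda_{ij}-\lambda_{ji}}{2}$, leaving a residual that depends only on the antisymmetric part of the interaction weights.

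Applying the pointwise bound $|K(a)-K(b)|\le 2\|K\|_{L^\infty}$ inside the expectation and integrating over $[0,T]$ then yields $|(J_i(\phi'_i,\phi_{-i})-J_i(\phi_i,\phi_{-i}))-(\Phi(\phi'_i,\phi_{-i})-\Phi(\phi_i,\phi_{-i}))|\le T\|K\|_{L^\infty}\sum_{j\neq i}|\lambda_{ij}-\lambda_{ji}|$, and taking the maximum over $i\in[N]$ delivers \eqref{eq:alpha_bound}. The final assertion, that any minimizer of $\Phi$ is an $\alpha$-NE of \eqref{eq:state_i}--\eqref{eq:cost_i}, then follows immediately from the preceding proposition applied with $\epsilon=0$.

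I do not anticipate a substantial obstacle, but two points require care. First, the pathwise (rather than distributional) invariance of $X_j^{\phi_j}$ under a deviation by player $i$ is essential for the clean cancellation of the $f_i,g_i$ contributions; this in turn rests on the decentralized closed-loop structure of \eqref{eq:state_i}, and is the step that must be handled differently from the open-loop case. Second, the manipulation of the pairwise double sum in $F$ must treat $j<i$ and $j>i$ uniformly, and the symmetry $K(x)=K(-x)$ is exactly what collapses them into a single sum over $j\neq i$ with the clean combined weight $\tfrac{\lambda_{ij}+\lambda_{ji}}{2}$. The symmetrization adopted in \eqref{eq:potential_fun_symmetric} is then precisely chosen so that the residual depends only on $\lambda_{ij}-\lambda_{ji}$, producing the asymmetry-based form of the bound \eqref{eq:alpha_bound}.
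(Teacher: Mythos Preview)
Your proposal is correct and arrives at the same bound as the paper, but the packaging differs slightly. The paper introduces an auxiliary symmetrized game with objectives $J_i^s$ obtained by replacing each weight $\lambda_{ij}$ by $\tfrac{\lambda_{ij}+\lambda_{ji}}{2}$, invokes an external result (\cite[Theorem~3.1]{guo2025towards}) to assert that $\Phi$ is an \emph{exact} potential for $(J_i^s)_{i\in[N]}$, and then bounds $|J_i(\phi)-J_i^s(\phi)|\le \tfrac{T\|K\|_{L^\infty}}{2}\sum_{j\neq i}|\lambda_{ij}-\lambda_{ji}|$ uniformly in $\phi$; a triangle inequality on the two evaluations $(\phi_i,\phi_{-i})$ and $(\phi_i',\phi_{-i})$ then gives \eqref{eq:alpha_bound}. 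Your route is the unpackaged version of the same computation: you expand both differences directly, observe the exact cancellation of the $f_i,g_i$ terms and of all pairwise terms not involving $i$ (this is precisely the content of the cited exact-potential result, verified by hand using the decentralized dynamics and the evenness of $K$), and bound the residual coefficient $\tfrac{\lambda_{ij}-\lambda_{ji}}{2}$ against $|K(a)-K(b)|\le 2\|K\|_{L^\infty}$. The advantage of your approach is that it is fully self-contained and makes explicit where the decentralized closed-loop structure enters; the paper's approach is more modular and separates the exact-potential structure from the approximation error, at the cost of an external citation.
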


 Theorem 
    \ref{thm:alpha_PG} 
    characterizes  the magnitude of $\alpha$  in terms of the   time horizon, the magnitude of the interaction kernel $K$  and the   weights $(\lambda_{ij})_{i,j
    \in [N]} $. 
     Clearly, $\alpha=0$ if the interaction is symmetric (i.e., $
    \lambda_{ij}=\lambda_{ji})$. We   refer to \cite{guo2025alpha,guo2025distributed}
for sufficient conditions under which $
\alpha$ vanishes as the number of players tends to 
infinity. 
Note that 
the upper bound  \eqref{eq:alpha_bound} depends on $K$ itself rather than its second-order derivative as in \cite{guo2025alpha,guo2025distributed}, and is therefore tighter for irregular kernels.
\begin{Remark}
   \label{eq:potential}
   
 The construction of the $\alpha$-potential function in Theorem \ref{thm:alpha_PG} differs from that in \cite{guo2025alpha, guo2025distributed} for stochastic games with \emph{open-loop} controls in the following aspects: 
(i) Theorem \ref{thm:alpha_PG} considers the Markov game \eqref{eq:state_i}–\eqref{eq:cost_i} with Lipschitz policies,
for which  the sensitivity-process techniques   in \cite{guo2025alpha,guo2025distributed} are inapplicable 
due to the non-differentiability of the   policies. 
(ii) 
Theorem \ref{thm:alpha_PG}  constructs an 
  $ \alpha$-potential function   \eqref{eq:potential_fun_symmetric}     using only   the state process    $X^\phi$, by exploiting the independence of 
$(X^{\phi_i}_i)_{i\in [N]}$.  This   reduces the computational cost of finding  an $ \alpha$-NE, 
  since the $\alpha$-potential function \eqref{eq:potential_fun_symmetric}  involves a $dN$-dimensional state process compared with the $2dN$-dimensional system required in \cite{guo2025distributed}.

\end{Remark}

\begin{proof}
    For all  $i\in[N]$, define a new objective  of player $i$:
\begin{equation}
\label{eq:cost_i_symmetric}
J^s_i(\phi) = \sE \Biggl[\int_0^T 
  \Biggl(
  f_i\big(
  X^{\phi_i}_{i,t},
  \phi_{i}(t,X^{\phi_i}_{i,t})\big) 
   +\sum_{j\not =i}\frac{\lambda_{ij}+\lambda_{ji}}{2}K(X^{\phi_i}_{i,t}-X^{\phi_j}_{j,t})
  \Biggr)\d t  + g_i(X^{\phi_i}_{i,T})\Biggl].
\end{equation}
By \cite[Theorem 3.1]{guo2025towards}, the game with the objectives $(J^s_i)_{i\in [N]}$ is a potential game with the potential function $\Phi$ defined in \eqref{eq:potential_fun_symmetric}, which implies
\[
\Phi(\phi_i,\phi_{-i})-\Phi(\phi_i',\phi_{-i}) = J_i^s(\phi_i,\phi_{-i})-J^s_i(\phi_i',\phi_{-i}).
\]
Note that
\begin{align*}
    |J_i^s(\phi)-J_i(\phi)|
    \leq \sE\left|\int_0^T\sum_{j\neq i}\left(\frac{1}{2}(\lambda_{ij}+\lambda_{ji})-\lambda_{ij}\right)K(X^{\phi_i}_{i,t}-X^{\phi_j}_{j,t})\d t \right|
    \leq \frac{T\|K\|_{L^\infty}}{2}\sum_{i\neq j}|\lambda_{ij}-\lambda_{ji}|.
\end{align*}
The desired conclusion then follows from
\begin{align*}
    &\left|\left(J_i(\phi_i,\phi_{-i})-J_i(\phi_i',\phi_{-i})\right)-\left(\Phi(\phi_i,\phi_{-i})-\Phi(\phi_i',\phi_{-i})\right)\right|\\
    =& \left|\left(J_i(\phi_i,\phi_{-i})-J_i(\phi_i',\phi_{-i})\right)-\left(J_i^s(\phi_i,\phi_{-i})-J^s_i(\phi_i',\phi_{-i})\right)\right|\\
    \leq& \left|J_i(\phi_i,\phi_{-i})-J_i^s(\phi_i,\phi_{-i})\right|+\left|J_i(\phi_i',\phi_{-i})-J^s_i(\phi_i',\phi_{-i})\right| \\
    \leq& T\|K\|_{L^\infty}\sum_{i\neq j}|\lambda_{ij}-\lambda_{ji}|.
\end{align*}
\end{proof}

One can further exploit specific structures of the interaction weights $\lambda_{ij}$  to construct a refined potential function with a reduced $\alpha$. The following corollary illustrates this for the separable weights $\lambda_{ij}=\gamma_i\tau_j$, where $\gamma_i$ represents player $i$’s sensitivity to others, and $\tau_j$ captures the influence of player $j$ on others.
In this case, the game   \eqref{eq:state_i}-\eqref{eq:cost_i} is equivalent to a potential game. 

\begin{Corollary}
   Suppose that 
   for all $i,j\in [N]$,  $\lambda_{ij}=\gamma_i\tau_j$
   for some $\gamma_i,\tau_j>0$.
   Define the function 
   $\tilde\Phi:\pi\to \sR$ such that for all $\phi\in \pi$,
\begin{equation}
\label{eq:potential_fun_rescale}
     \tilde\Phi(\phi) \coloneqq \sE \left[\int_0^T \tilde F(X^{\phi}_t,\phi(t, X^{\phi}_t))\d t + \tilde G(X^{\phi}_T)\right],
\end{equation}
 where 
$X^\phi_t$ and $\phi(t,X^\phi_t)$
are defined as in \eqref{eq:potential_fun_symmetric},
and 
\begin{align}
\begin{split}
\tilde F(x,a)& \coloneqq \sum_{i=1}^N \frac{\tau_i}{\gamma_i}f_i(x_i,a_i)+\frac{1}{2}\sum_{i\neq j}d_id_jK(x_i-x_j),
\\
\tilde G(x) & \coloneqq \sum_{i=1}^N \frac{\tau_i}{\gamma_i}g_i(x_i).
\end{split}
\end{align}
Then any minimizer of 
\eqref{eq:potential_fun_rescale}
is an NE of the   game 
\eqref{eq:state_i}-\eqref{eq:cost_i}. 
\end{Corollary}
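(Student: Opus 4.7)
The plan is to reduce the corollary to Theorem \ref{thm:alpha_PG} through a player-wise positive rescaling that symmetrizes the interaction weights. First, I would define the rescaled objective $\tilde{J}_i(\phi) := (\tau_i/\gamma_i)\, J_i(\phi)$ for each $i\in[N]$. Because $\tau_i/\gamma_i > 0$ is a strictly positive constant independent of $\phi$, the minimizer of $\phi_i \mapsto J_i(\phi_i,\phi_{-i})$ coincides with that of $\phi_i\mapsto \tilde{J}_i(\phi_i,\phi_{-i})$ for every fixed $\phi_{-i}\in\pi_{-i}$. Hence a profile $\bar\phi\in\pi$ is an exact NE of \eqref{eq:state_i}-\eqref{eq:cost_i} if and only if it is an exact NE of the rescaled game with objectives $(\tilde{J}_i)_{i\in[N]}$.

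Next, I would read off the rescaled interaction weights. The coefficient of $K(X_{i,t}^{\phi_i}-X_{j,t}^{\phi_j})$ inside $\tilde{J}_i$ is $(\tau_i/\gamma_i)\lambda_{ij}=(\tau_i/\gamma_i)\gamma_i\tau_j=\tau_i\tau_j$, which is manifestly symmetric in $(i,j)$. Therefore Theorem \ref{thm:alpha_PG}, applied to the rescaled game with running costs $(\tau_i/\gamma_i)f_i$, terminal costs $(\tau_i/\gamma_i)g_i$, and interaction weights $\tilde\lambda_{ij}=\tau_i\tau_j$, yields the bound
\[
\tilde\alpha \;\leq\; T\|K\|_{L^\infty}\max_{i\in[N]}\sum_{j\neq i}|\tau_i\tau_j-\tau_j\tau_i|\;=\;0.
\]
Substituting the rescaled costs and weights into the potential formula \eqref{eq:potential_fun_symmetric}, the resulting $0$-potential function is exactly $\tilde\Phi$ in \eqref{eq:potential_fun_rescale} (identifying $d_i$ with $\tau_i$), so the rescaled game is a (genuine) potential game with potential $\tilde\Phi$.

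Finally, since $\tilde\alpha=0$, the potential identity reads $\tilde{J}_i(\phi_i',\phi_{-i})-\tilde{J}_i(\phi_i,\phi_{-i})=\tilde\Phi(\phi_i',\phi_{-i})-\tilde\Phi(\phi_i,\phi_{-i})$ for all admissible deviations, so any minimizer of $\tilde\Phi$ is an exact NE of the rescaled game, and therefore an exact NE of \eqref{eq:state_i}-\eqref{eq:cost_i} by the first step. I do not anticipate any serious obstacle: every step is either an elementary algebraic identity or a direct invocation of Theorem \ref{thm:alpha_PG}. The only point warranting slight care is that the rescaling factor $\tau_i/\gamma_i$ must not depend on player $i$'s own policy, so that her best responses—and hence NEs—are preserved; this is immediate from the separability hypothesis $\lambda_{ij}=\gamma_i\tau_j$ and the positivity of $\gamma_i,\tau_i$.
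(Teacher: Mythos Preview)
Your proposal is correct and follows essentially the same route as the paper: rescale each player's objective by $\tau_i/\gamma_i$, observe that this preserves NEs and symmetrizes the interaction weights to $\tau_i\tau_j$, then invoke Theorem~\ref{thm:alpha_PG} with $\alpha=0$ to identify $\tilde\Phi$ as an exact potential. Your write-up is in fact slightly more explicit than the paper's in justifying why the rescaling preserves NEs and in computing the $\alpha$-bound to be zero.
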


\begin{proof}
   Observe that the set of NEs of the   game 
\eqref{eq:state_i}-\eqref{eq:cost_i} coincdes with the set of NEs of the game with the  scaled objectives $(\tilde J_i)_{i\in [N]}$ defined by
     \begin{equation}\label{eq:cost_i_rescale}
    \tilde J_i(\phi) = \frac{\tau_i}{\gamma_i} J_i(\phi)= \sE \Biggl[\int_0^T 
  \Biggl(\frac{\tau_i}{\gamma_i} f_i\big(
  X^{\phi_i}_{i,t},
  \phi_{i}(t,X^{\phi_i}_{i,t})\big)+\sum_{j\not =i}\tau_i\tau_jK(X^{\phi_i}_{i,t}-X^{\phi_j}_{j,t})
  \Biggr)\d t  + \frac{\tau_i}{\gamma_i}g_i(X^{\phi_i}_{i,T})\Biggr].
     \end{equation}
By Theorem \ref{thm:alpha_PG},
$\tilde \Phi$ is a  potential function 
of the game with 
 the  scaled objectives $(\tilde J_i)_{i\in [N]}$,
 which completes the proof.  
\end{proof}

\section{Policy gradient algorithm for $\alpha$-NE}
\label{sec:algo}

This section proposes a policy gradient (PG) algorithm to  minimize the $\alpha$-potential function  \eqref{eq:potential_fun_symmetric}, which subsequently yields an $\alpha$-NE of the   Markov   game \eqref{eq:state_i}–\eqref{eq:cost_i}.

The algorithm parametrizes player $i$'s  policy  using  a sufficiently expressive neural network  $\phi_i^{\theta_i}:[0,T]\times\sR^{d}\to A_i$ with parameters $\theta_i\in\sR^{L_i}$,
and seeks the optimal parameters $\theta=(\theta_i)_{i\in [N]}$ by minimizing  the following approximate
$\alpha$-potential function over $\theta$: 
\begin{equation}\label{eq:param-potential}
  \Phi(\theta):=\sE\left[\int_0^T F\!\big(X_t^\theta,\phi^\theta(t,X_t^\theta)\big)\d t+G\big(X_T^\theta\big)\right],
\end{equation}
where    $X_t^\theta=(X_{i,t}^{{\theta_i}})_{i\in [N]}$ with $X^{{\theta_i}}$ satisfying
\begin{equation}\label{eq:param-dynamics}
   \d X_{i,t}  = b_i(X_{i,t},\phi_i^{\theta_i}(t,X_{i,t}))\d t+\sigma_i(X_{i,t},\phi_i^{\theta_i}(t,X_{i,t}))\d W_t, 
\end{equation}
and  $\phi^\theta(t,X_t^\theta)=(\phi_i^{\theta_i}(t,X_{i,t}^{\theta_i}))_{i\in[N]}$. 
We find the minimizer of   the   function \eqref{eq:param-potential}   using  gradient-based   algorithms (e.g., the  Adam method). 

To evaluate the objective \eqref{eq:param-potential} and its derivative, we   discretize
\eqref{eq:param-dynamics} on time grid $\Pi_P=\{0=t_0<\dots<t_P=T\}$ for some $P\in \sN$: for any $i\in[N]$, $X^{\theta_i}_{i,0}=\xi_i$, and for any $\ell=0,\dots,P-1$, 
\begin{equation}\label{eq:param-dynamics-disc}
        X_{i,t_{\ell+1}}^{\theta_i} = X_{i,t_{\ell}}^{\theta_i} + b_i( X_{i,t_{\ell}}^{\theta_i}, \phi_i^{\theta_i}(t_\ell, X_{i,t_{\ell}}^{\theta_i}))\Delta_\ell      
        + \sigma_i( X_{i,t_{\ell}}^{\theta_i}, \phi_i^{\theta_i}(t_\ell, X_{i,t_{\ell}}^{\theta_i}))\Delta W_\ell,
\end{equation}
where 
$\Delta_\ell =t_{\ell+1}-t_\ell$, and $ 
\Delta W_\ell = W_{t_{\ell+1}}-W_{t_\ell}$. 
The objective  \eqref{eq:param-potential} is then approximated by  
\begin{equation}\label{eq:param-potential-approx}
            \Phi_M(\theta)=\frac{1}{M}\sum_{m=1}^M \bigg[\sum_{\ell=0}^{P-1}F(X_{t_\ell}^{\theta,(m)},\phi^{\theta}(t_\ell,X_{t_\ell}^{\theta,(m)})) \Delta_\ell+G(X_{T}^{\theta,(m)})\bigg],
\end{equation}
where 
$\left(X^{\theta,(m)}\right)_{ m\in[M]}=\left(X_{i }^{\theta_i,(m)} \right)_{ i\in[N] ,m\in[M]}$ are $M$ independent trajectories of   \eqref{eq:param-dynamics-disc}.
Note that when the system is deterministic (i.e., $\sigma_i=0$ for all $i \in [N]$), it suffices  to take $M=1$ in \eqref{eq:param-potential-approx}.
The derivative $\partial_\theta \Phi_M(\theta)$ can be computed via automatic differentiation.


   

\section{Numerical Experiments} 
\label{sec:numeric}

Now we aim to evaluate the performance of our proposed model, including the following research questions: 
\begin{itemize}[leftmargin=*, itemsep=0pt, parsep=0pt]
    \item \textbf{RQ1:}  How would interaction terms affect collision avoidance with other cars? 
    \item \textbf{RQ2:} Could CAVs avoid collisions with obstacles under 2D control?
    \item \textbf{RQ3:} How do different types of vehicles with various interactions affect the equilibrium?
\end{itemize}

\textbf{Experimental setup.}
The experiments use a fleet of CAVs on a long highway (1D) or its 2D extension with a common start position ($x_{0,i}=-1$ in 1D cases and $(-1,-1)$ in 2D cases) and a target position ($z_{i}=1$ in 1D cases and $(1,1)$ in 2D cases). 

\textbf{Procedure.}
We apply the policy-gradient algorithm (Sec.~\ref{sec:algo}) to three scenarios: 
(i) a 1D game comparing weak and strong interactions under both \emph{velocity} vs. \emph{acceleration} control models; 
(ii) a 2D acceleration-control game with obstacles of different sizes to assess how CAVs avoid collisions to obstacles; 
and (iii) a 1D heterogeneous game with \emph{large/medium/small} vehicle types, implemented via size-dependent cross-group interaction strengths.




\subsection{Interaction regimes in cost function}
\label{sec:numerical_interaction}
This section examines the impact of $\beta$ in \eqref{eq:interaction_kernel} for both velocity-control and acceleration-control models. Specifically, for each model, we test   two   choices  $\beta=0$ and $\beta=1$, which characterize the weak-interaction regime and strong-interaction regime, respectively. We consider a 10-player game in CAV control and assume that each player has a one-dimensional state, which follows from \eqref{eq:CAV_1} under velocity control and 
\eqref{eq:CAV_2} under acceleration control with a uniform noise level $\sigma_i=0.1$. Each vehicle $i$  
minimizes its objective function defined in \eqref{eq:cost_i} with $f_i(x,a_i)=0.1a_i^2$ and  $g_{i}(x)=10|x-z_{i}|^2$, where $z_{i}=1$ is the target terminal position for vehicle $i$.  
Interaction kernel $\lambda_{ij}$ and $K$ 
are defined in \eqref{eq:interaction_kernel} with
$\rho:\sR^d\to [0,\infty)$  given by
$
\rho (z)={(|z|^2+1)^{-1} }.
$ 

\begin{figure}[htbp]
    \centering
    \includegraphics[width=0.41\textwidth]{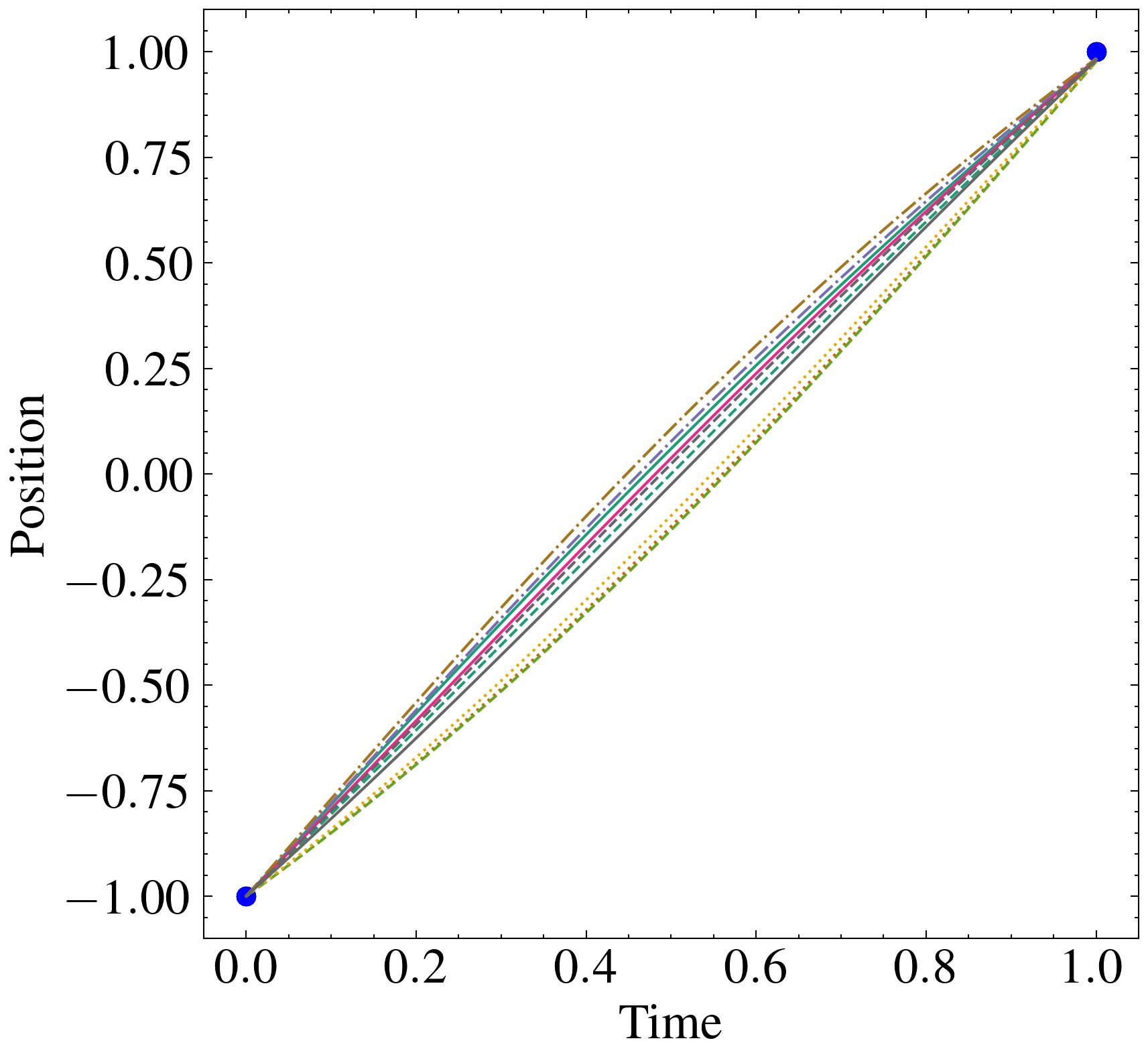}
    \hspace{0.05\textwidth}
    \includegraphics[width=0.4\textwidth]{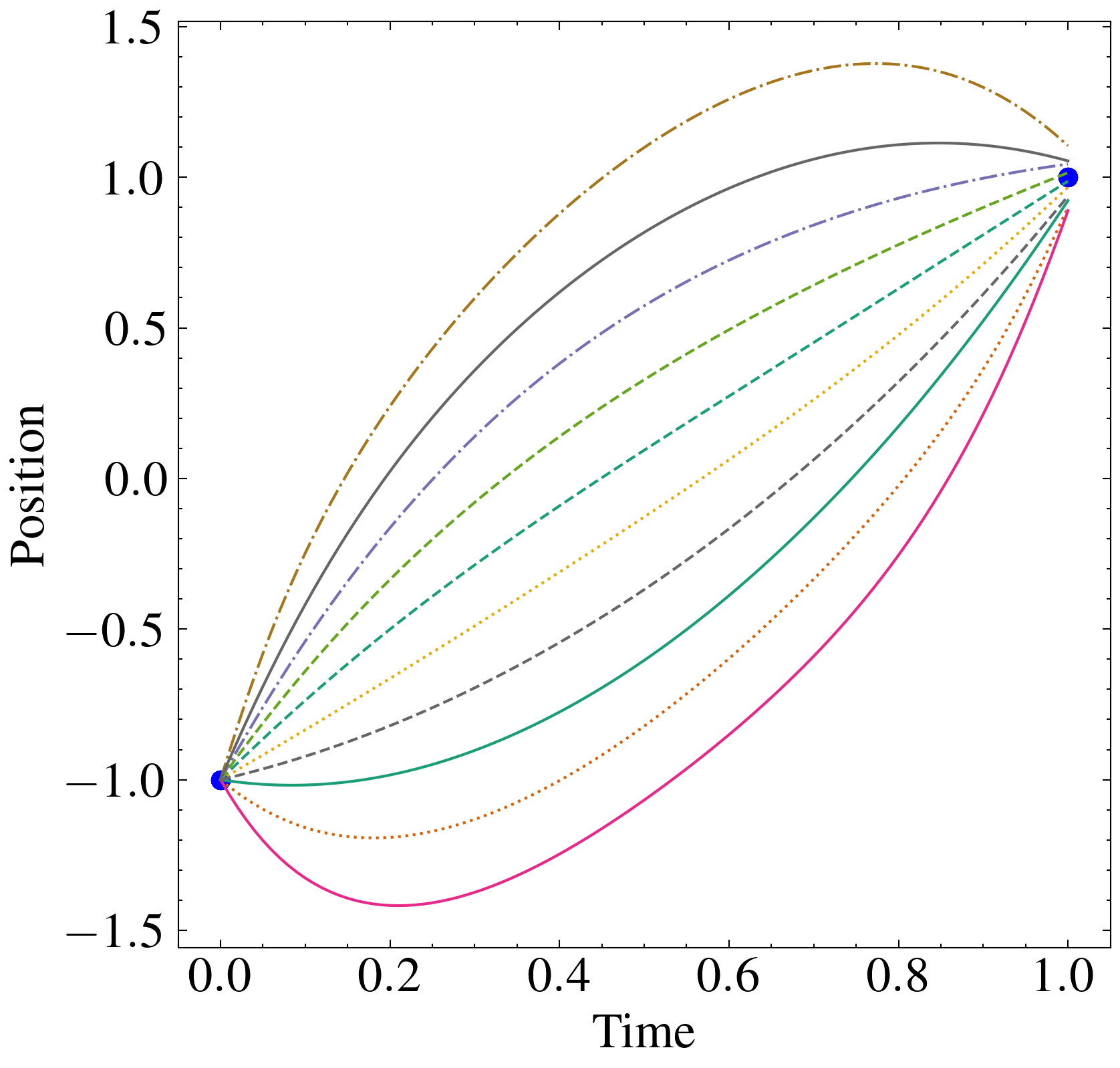}
    \caption{Vehicle trajectories of the control-velocity model for $\beta=0$ (left) and $\beta=1$ (right)}
\end{figure}


\begin{figure}[htbp]
    \centering
    \includegraphics[width=0.41\textwidth]{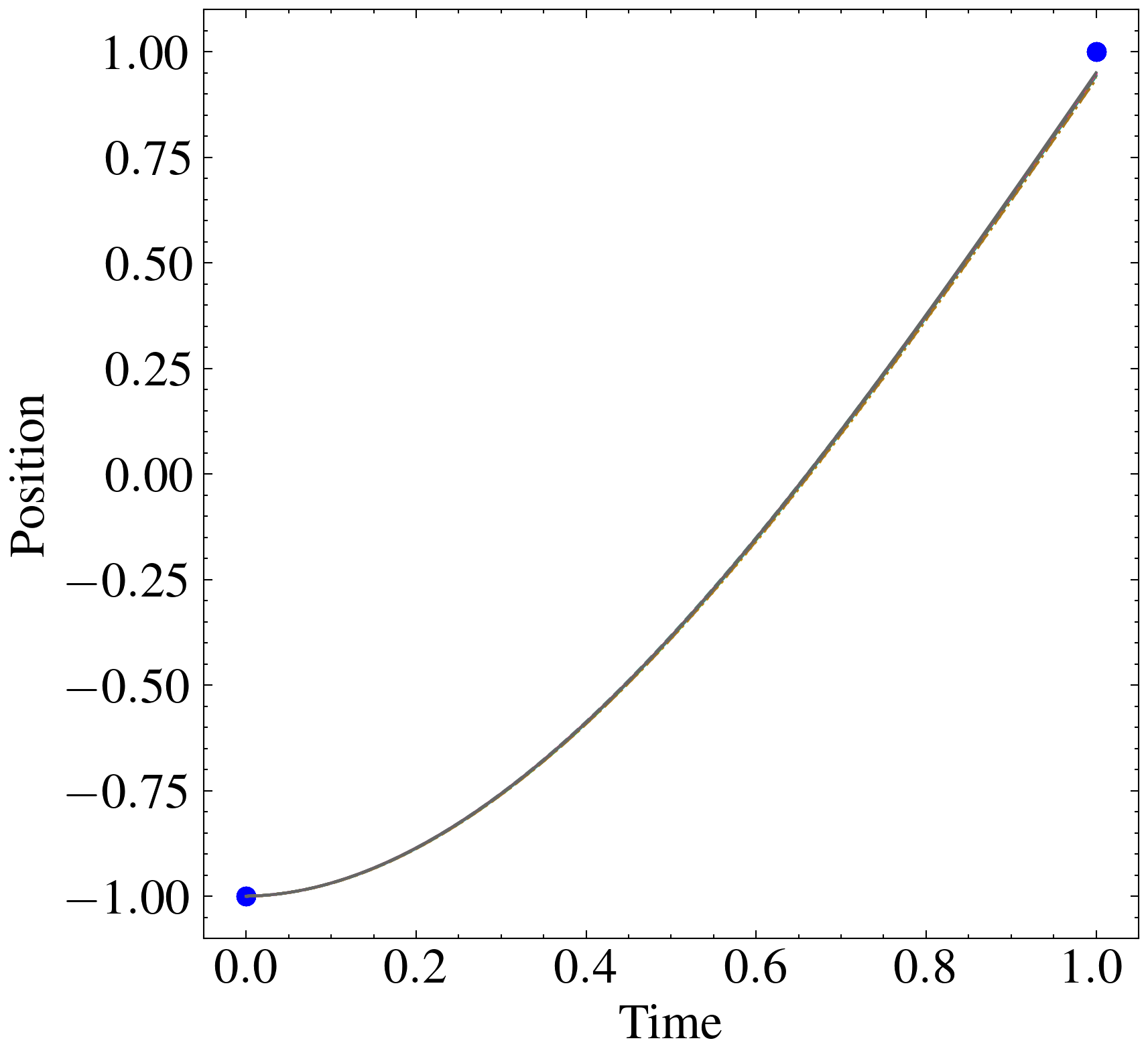}
    \hspace{0.05\textwidth}
    \includegraphics[width=0.4\textwidth]{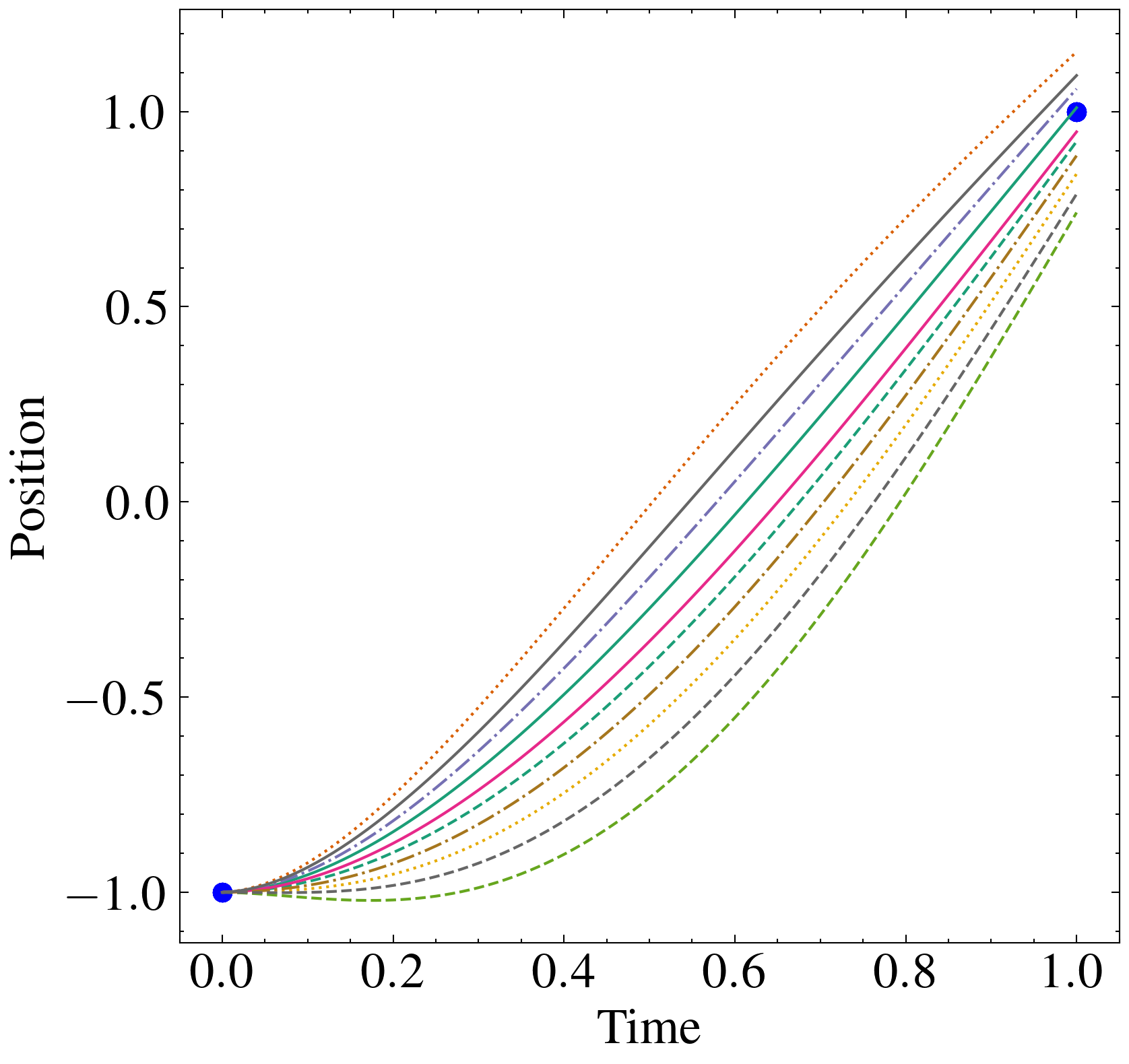}
    \caption{Vehicle trajectories of the control-acceleration model   for $\beta=0$ (left) and $\beta=1$ (right)}
    \label{fig:second_order_1D}
\end{figure}

We observe that, in the weak-interaction regime ($\beta=0$), both the velocity- and acceleration-control models yield near-identical controls and hence similar trajectories, reflecting the symmetric MFG structure in which agents adopt the same feedback policy. However, in the strong-interaction regime ($\beta=1$), vehicles deliberately choose different controls to give each other space, and their paths diverge. This shows that the strong-interaction regime ensures collision avoidance more effectively. 

In addition, it can be seen that under velocity-control, vehicles frequently detour (even briefly backtrack) to avoid others yet still reach the target accurately, whereas under acceleration-control, trajectories are smoother but exhibit dispersed terminal positions, indicating greater difficulty in hitting the target precisely. This happens because velocity control gives agents direct authority over position with cheap late corrections, so they can prioritize separation and fix position near the end; acceleration control introduces inertia, making sharp turns and last-minute corrections costly, so the optimizer smooths motion and tolerates small terminal errors.



\subsection{Two-dimensional acceleration control with obstacles}
This section  examines vehicle behavior in the presence of circular obstacles on the road.
We consider a 10-player game in two-dimensional CAV control, i.e. $d=2, N=10$. The state dynamics follows \eqref{eq:CAV_2} under acceleration control without noise ($\sigma_i \equiv 0$). Each vehicle $i$  minimizes its objective function   \eqref{eq:cost_i},
where
the terminal cost is 
$g_{i}(x)=2|x-z_{i}|^2$ for  $z_{i}=(1,1)$, 
and the interaction kernel $\lambda_{ij}$ and $ K$ are defined the same as in Sec.~\ref{sec:numerical_interaction} with $\beta=1$. 
The running cost $f_i$ depends on the position and the size of the obstacle. 
Specifically, we consider two circular obstacles centered at  $(0,0)$ with radii $0.1$ and $0.5$, respectively. To capture an obstacle avoidance behavior, we consider the running cost
$f_i(x,a_i)=0.02a_i^2+ h(x)$, with 
\begin{equation*}
    h(x)\coloneqq 1000\left(1-\frac{1}{1+\exp[10(1-M|x|^2)]}\right),
\end{equation*}
where $M=100$ for the small obstacle of radius $0.1$
and $M=4$ for   the large obstacle of radius $0.5$.
The cost $h$ is a   differentiable  version of
forbidding passage through the   circular obstacles
\cite{baros2025mean}.
For comparison,  
we also consider the case without obstacles by setting
  $h\equiv 0$.


\begin{figure}[htbp]
    \centering
    \includegraphics[width=0.3\textwidth]{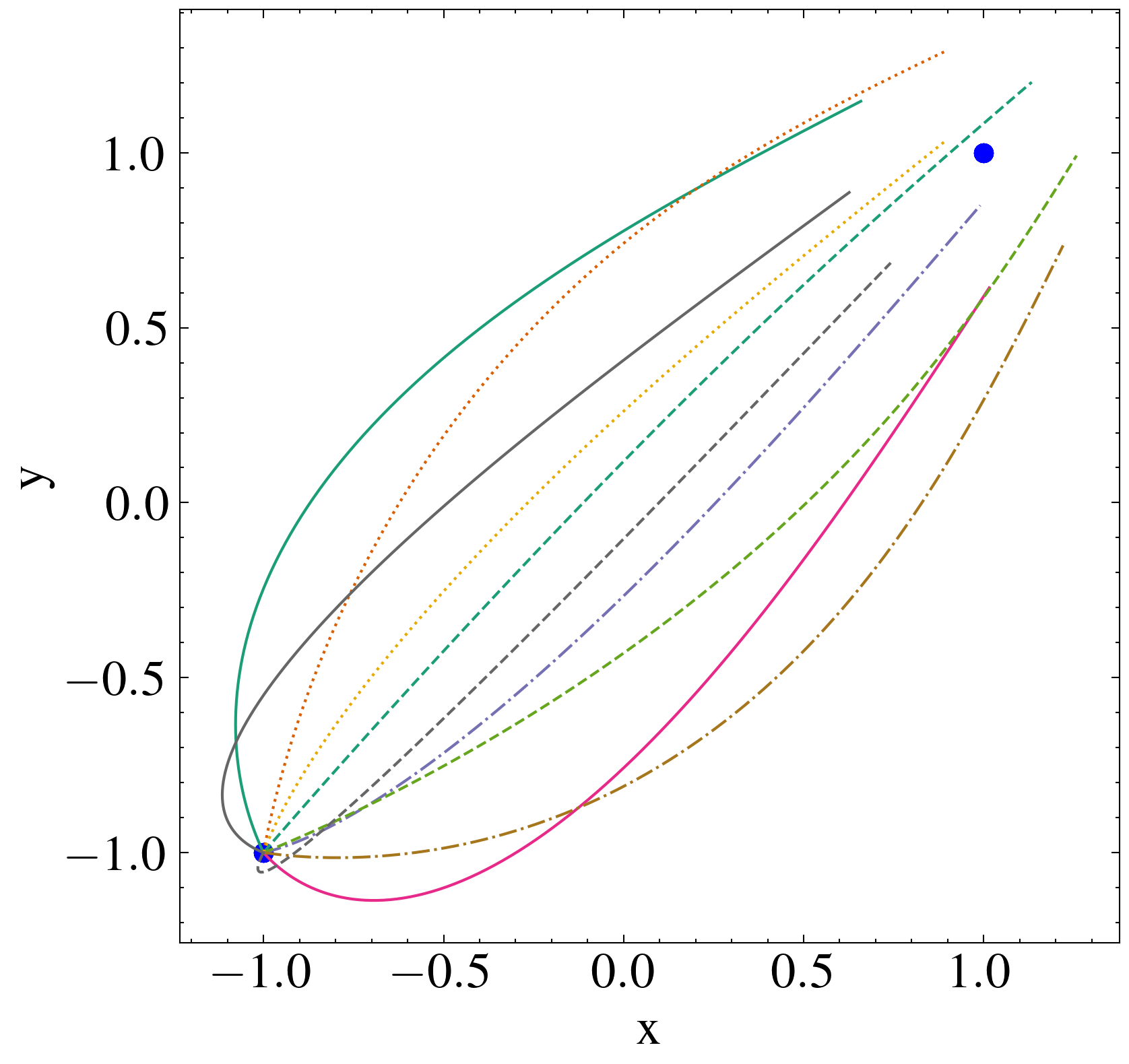}
    \hspace{0.03\textwidth}
    \includegraphics[width=0.3\textwidth]{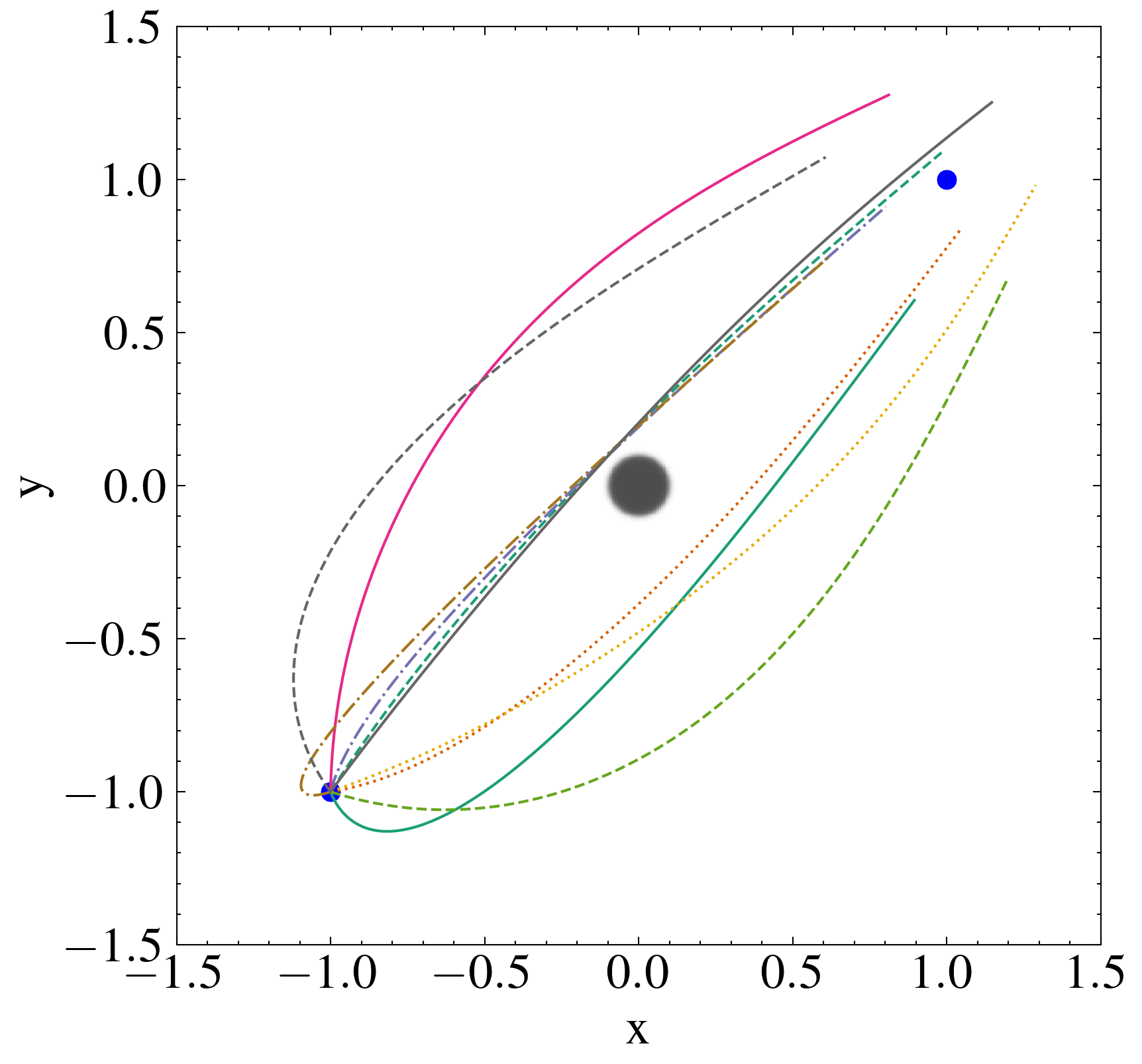}
    \hspace{0.03\textwidth}
    \includegraphics[width=0.3\textwidth]{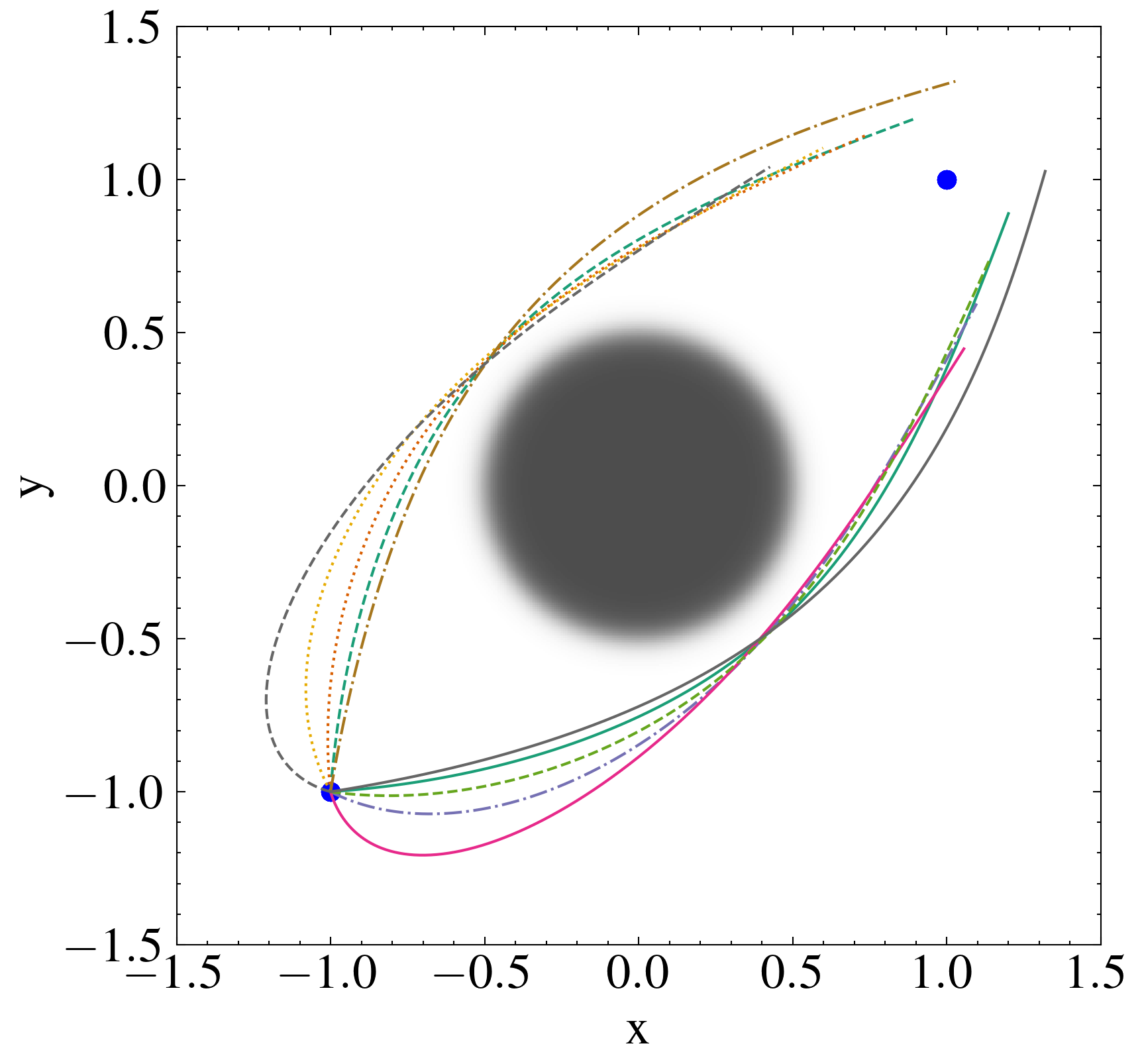}
    \caption{Vehicle trajectories under acceleration-control 
    without obstacle (left), with a small obstacle (middle), and with a large obstacle (right)}
    \label{fig:small_obstacle}
\end{figure}


Fig.~\ref{fig:small_obstacle} shows that    introducing  an obstacle causes the vehicle trajectories to adjust accordingly  to satisfy obstacle-avoidance constraints. When a large obstacle is present, its wider exclusion region forces the trajectories to deviate earlier and more noticeably from their nominal paths. This results in a global reshaping of the motion patterns. In contrast, a smaller obstacle influences the trajectories only locally: only vehicles passing near its vicinity exhibit visible adjustments, while others remain close to the baseline case. These observations highlight that the trajectories respond adaptively to the obstacle, and the extent of deviation is determined by both the size and placement of the obstacle within the environment.

\subsection{Heterogeneous vehicle types}
\label{sec:different_vehicle}

In this section, we consider the games with heterogeneous vehicles and investigate how different vehicles behave in the game. We take a 9-player game in a 1D CAV game without noise, where vehicles are divided into three groups: large, medium, and small. Each group contains three vehicles, with size parameters $\tau=10$ for large vehicles, $\tau=3$ for medium vehicles, and $\tau=0.5$ for small vehicles. For each vehicle, we sample $\gamma_i\tau_i$ from $\text{Unif}(0.8,1.2)$ which introduces randomness while ensuring that larger vehicles exert greater influence on others and are, in turn, less affected by them. The parameters of $\gamma_i$ and $\tau_i$ are given in   Table \ref{table: vehicle types parameters}.
\begin{table}[h]
\centering
\caption{Values of $\{\gamma_i\}_{i\in[9]}$ and $\{\tau_i\}_{i\in[9]}$.}\label{table: vehicle types parameters}
\begin{tabular}{l c c c c c c c c c}
\toprule
Vehicle & 1 & 2 & 3 & 4 & 5 & 6 & 7 & 8 & 9 \\
\midrule 
$\gamma_i$ & 0.115 & 0.117 & 0.095 & 0.395 & 0.319 & 0.347 & 1.805 & 2.235 & 2.353 \\
$\tau_i$ & 10 & 10 & 10 & 3 & 3 & 3 & 0.5 & 0.5 & 0.5 \\
\bottomrule
\end{tabular}
\end{table}
 Each vehicle minimizes the objective function  \eqref{eq:cost_i} with  $f_i(x,a_i)=0.02a_i^2$,  $g_{i}(x)=2|x-1|^2$,
$\lambda_{ij}=\gamma_i\tau_j$ for all $i,j\in[9]$ and  $K(z)=(N^2|z|^2+1)^{-1}$, which extends \eqref{eq:interaction_kernel} by incorporating    asymmetric interactions.
%

\begin{figure}[htbp]
    \centering
    \includegraphics[width=0.4\textwidth]{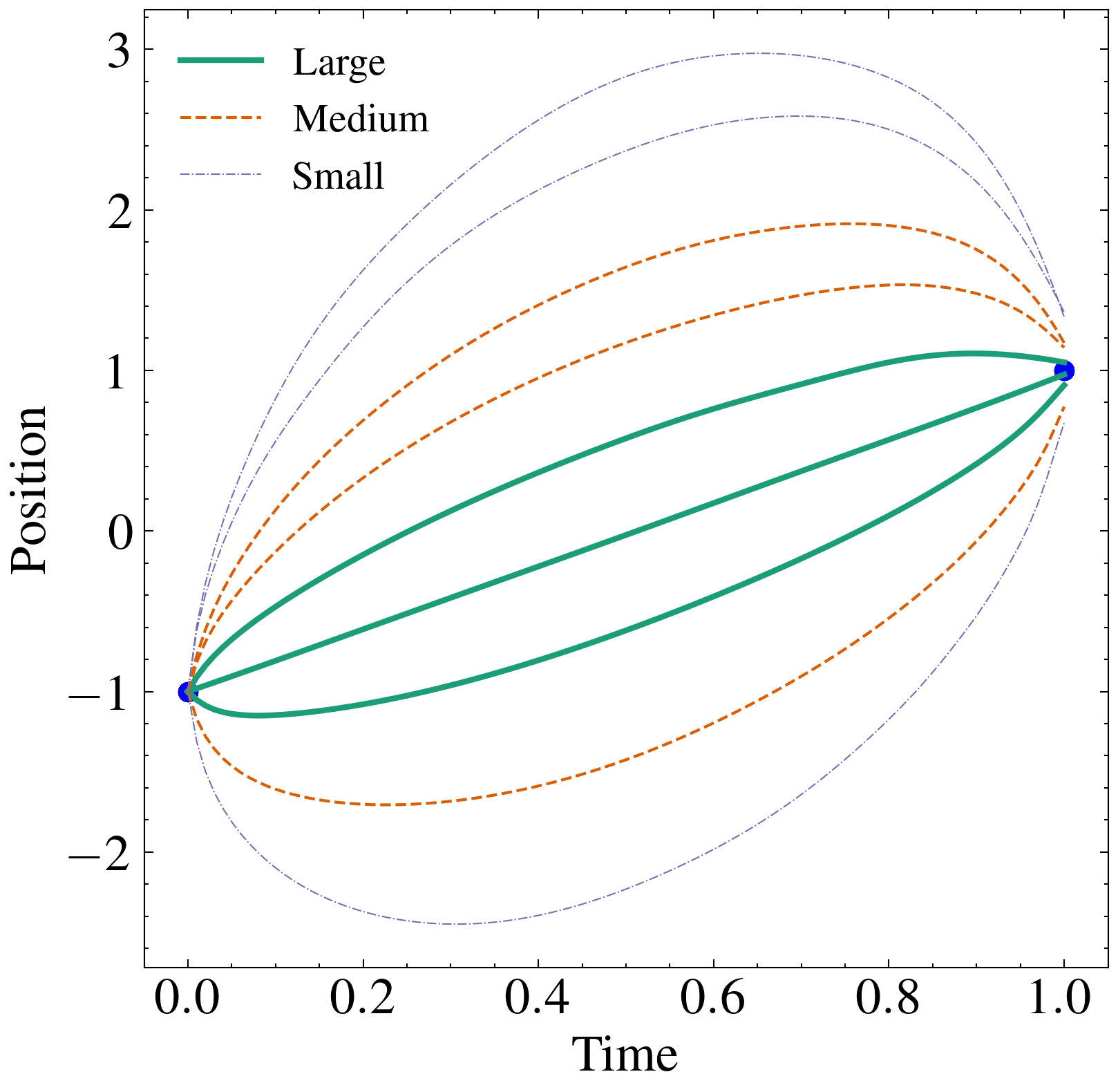}
    \hspace{0.05\textwidth}
    \includegraphics[width=0.418\textwidth]{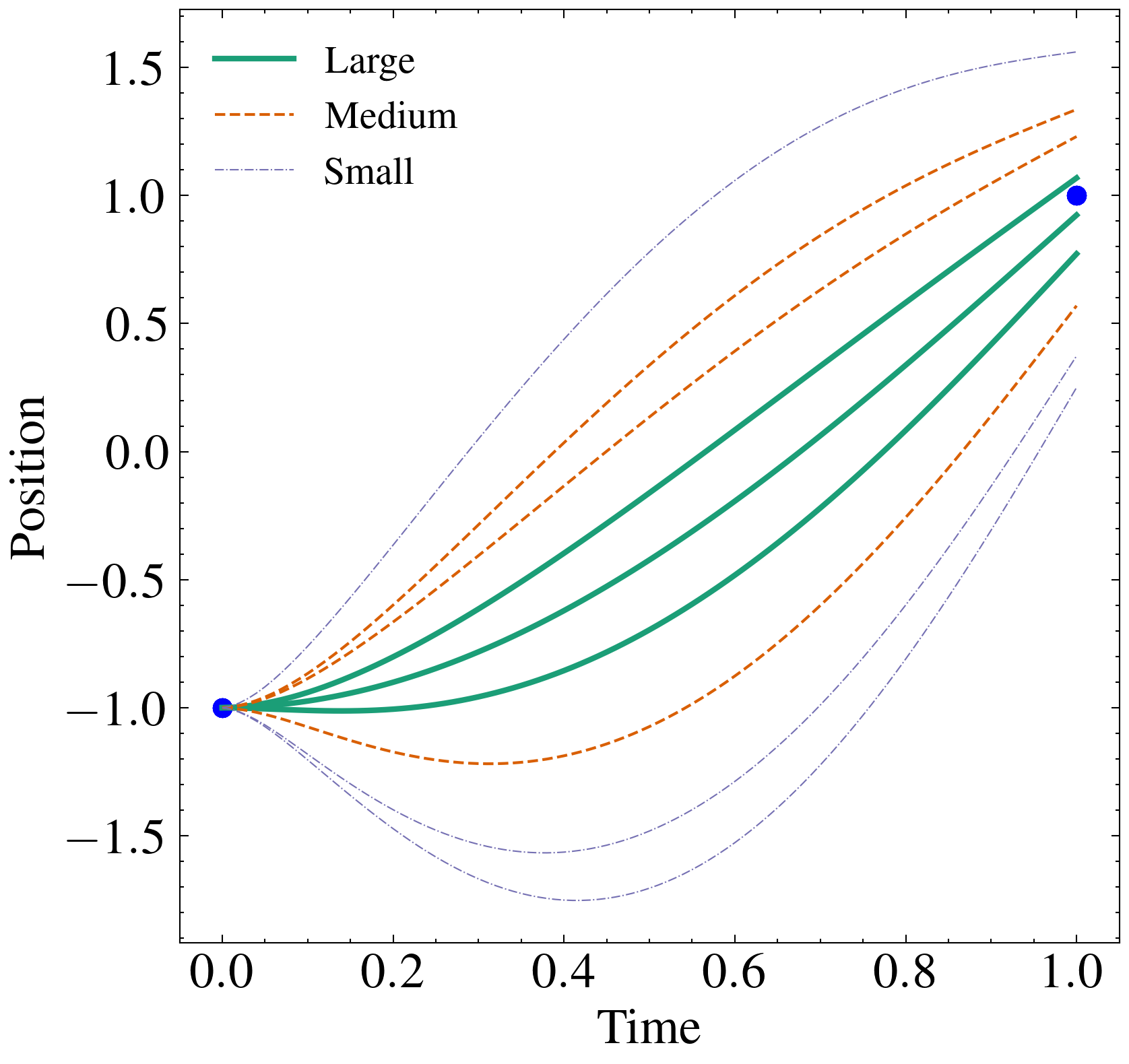}
    \caption{Vehicle trajectories of different types under control-velocity (left) and control-acceleration (right)}
    \label{fig:different_vehicle}
\end{figure}
Fig.~\ref{fig:different_vehicle} 
shows that both   models exhibit a consistent hierarchical structure among the large, medium, and small vehicles. The trajectories follow a clear ordering, with the large vehicles occupying the central path, the medium vehicles positioned outward from it, and the small vehicles pushed even further toward the periphery. This ordering reflects the asymmetric interaction strength among different vehicle sizes: larger vehicles impose a stronger impact on the motion of nearby vehicles and exhibit a lower sensitivity to the actions of others. In contrast, smaller vehicles are more strongly affected by the presence of larger ones. This size-dependent interaction pattern appears consistently in both the velocity- and acceleration-control models.

\section{Conclusions} 
\label{sec:conclude}

We proposed an $\alpha$-potential game
for decentralized control of heterogeneous CAVs. 
Scalable policy gradient algorithm is developed for computing $\alpha$-NEs with decentralized neural-network policies.
A series of numerical experiments are performed, demonstrating that our model  effectively captures collision avoidance and agent heterogeneity.



\section{Acknowledgement} 
\label{sec:acknowledgement}
Di acknowledges support from NSF CMMI-1943998.
Zhang acknowledges support from the Imperial Global Connect Fund.
 This work was partially supported by a grant from the Simons Foundation.
 Zhang  thanks the Isaac Newton Institute for Mathematical Sciences, Cambridge, for support and hospitality during the programme Bridging Stochastic Control and Reinforcement Learning, where work on this paper was undertaken. This work was supported by EPSRC grant no EP/R014604/1.
 This research was funded in part by JPMorgan Chase \& Co. Any views or opinions expressed herein are solely those of the authors listed, and may differ from the views and opinions expressed by JPMorgan Chase \& Co. or its affiliates. This material is not a product of the Research Department of J.P. Morgan Securities LLC. This material does not constitute a solicitation or offer in any jurisdiction.


\bibliographystyle{siam}
\bibliography{pg1.bib}

 \end{document}